\newcommand{\R}{\ensuremath{\mathbb{R}}}
\newtheorem{thm}{Theorem}
\newtheorem{lem}[thm]{Lemma}
\newtheorem{defn}[thm]{Definition}
\DeclareMathOperator{\bag}{bag}
\title[Higher-order chain rules without losing your marbles]
{How to stare at the higher-order $n$-dimensional Chain-rule
without losing your marbles}
\author{Henry O. Jacobs}
\date{\today}
\begin{document}

\maketitle

Let $f,g: \R \to \R$.
The formula for the $n$th derivative of $f\circ g$ is
given by Fa\`a di Bruno's formula
\begin{align}
  \begin{split}
  &\frac{d^n}{dx^n} (f \circ g)(x) = \\
  &\quad
  \sum_{k=1}^{n}
  \left(
    \sum_{
      \substack{
        m_1 + \cdots + m_n = k \\
        m_1 + 2 m_2 + \cdots + n m_n = n
      }
    }
      \frac{n!}{m_1! \cdots m_n !} g^{(k)}(f(x)) 
      \left( \frac{f^{(1)}(x)}{1!} \right)^{m_1}
      \cdots
      \left( \frac{f^{(n)}(x)}{n!} \right)^{m_n}
  \right)
  \end{split}\label{eq:Faa1}
\end{align}
A typical reaction to a first encounter with \eqref{eq:Faa1}
is described in the first page of \cite{Flanders2001} wherein
the author was asked to provide a proof of the above formula:

\begin{quotation}
  This exercise is hardly routine calculus!  [...]
  All those factorials in the denominators, raised to powers yet!
  My four years of Chicago high school mathematics: Algebra, Advanced Algebra, [...],
  calculus hardly prepared me for Fa\`a's formula.
\end{quotation}
Fortunately, \cite{Flanders2001} manages to make sense of the formula; by
justifying the restrictions on the $m$'s and all those factorials.
In fact, over the past two centuries \eqref{eq:Faa1} has been
viewed from a variety of perspectives; such as Bell polynomials,
set partitions, determinant formulas, and so on \cite{Johnson2002}.

However, there is very little written on the multidimensional 
generalization of \eqref{eq:Faa1}.
To provide perspective on the issue, 
we can state the multidimensional higher-order
chain rule at orders $1,2$, and $3$.
Let $\partial_{i}$ denote the partial differentiation operator
along the $i$th coordinate direction,
and let $\partial_{ij} = (\partial_i \circ \partial_j), \partial_{ijk} = (\partial_i \circ \partial_j \circ \partial_k)$.
If $g = (g^1,\dots,g^d) : \R^c \to \R^d$ and $f:\R^d \to \R$, then
\begin{align*}
  \partial_i (f \circ g)(x) &= \sum_{j=1}^{d}\partial_jf(g(x)) 
  (\partial_i g^j(x)) \\
  \partial_{ij}(f \circ g)(x) &= \sum_{k,\ell = 1}^{d} \partial_{k\ell}f(g(x))
  (\partial_i g^k(x) ) (\partial_j g^{\ell}(x))
  + \sum_{k=1}^{d}\partial_k f(g(x)) \partial_{ij} g^k(x)
\end{align*}
and finally,
\begin{align*}
  &\partial_{ijk} (f \circ g)(x) = 
  \sum_{\ell,m,n=1}^{d} \partial_{\ell m n} f(g(x)) 
  (\partial_i g^\ell(x))  (\partial_j g^m(x))
  (\partial_k g^n(x)) \\
  &\quad + \sum_{mn=1}^{d} \partial_{mn}f(g(x)) \left(
  ( \partial_j g^m(x) )(\partial_{ik} g^n(x)) +
  ( \partial_k g^m(x) ) (\partial_{ij} g^n(x)) +
  ( \partial_i g^m(x) ) (\partial_{jk} g^n(x) \right) \\
  &\quad + \sum_{\ell=1}^{d} \partial_\ell f(g(x)) \partial_{ijk}g^\ell(x).
\end{align*}
where $i,j,k = 1,\dots,c$.

It is natural to ask for a generic formula for
$\partial_{i_1 \cdots i_n} (f \circ g)(x)$.
One must consider partitions of the tuple $(i_1,\dots,i_n)$,
and account for equivalences such as
``$\partial_{ij} \equiv \partial_{ji}$''.
Simply printing the formula using a standard multi-index
convention is a formidable task
and we are not able to present it here.
Instead, we refer the reader to the statement and proof in
 \cite{ConstantineSavits1996}.
It is notable that the proof in \cite{ConstantineSavits1996}
consists of six full pages of equations, broken down into four lemmas.
Much effort goes into simplyfying expressions and keeping
the combinatorial complexity under control.
This is in spite of good notation and an efficient proof!

In this paper, we will introduce a version of the multidimensional higher-order
chain rule, in which all of the coefficients are equal to $1$ and the proof
 is relatively simple.  This is done by putting greater emphasis
 on the algebraic structure of the indices.

\section{Multiset indices}
  Perhaps we could place (hide?) the
  combinatoric considerations in the indexing convention itself.
  This entails summing over a smaller set of more sophisticated
  indices.
  One standard multi-indexing convention in $d$ variables
  is to define the multi-indices as tuples of $\{1,\dots,d\}$.
  Given a tuple $(a_1,\dots,a_n)$ we can consider the partial
  differential operator  $\partial_{a_1 \cdots a_n}$.
  However, any two multi-indices which are equivalent modulo
  permutation will generate the same partial differential operator
  due to the equivalence of mixed partials.
  To remedy this, one might be tempted to consider the set $\{ a_1,\dots,a_n\}$
  rather than the tuple $(a_1,\dots,a_n)$, because this ``mods out'' the permutation symmetry.
  However, resorting to sets removes multiplicities.
  There is no way to represent the partial differential operator $\partial_x^2 = \partial_{xx}$
  using just sets because $\{x,x\} = \{x\}$.
  Thus we seek a convention which respects multiplicities and permutation symmetry.
  \begin{defn}
    A \emph{multiset} (or \emph{bag}) is a pair $(A,m)$ where $A$ is a set and 
    $m$ is a map from $A$ to $\mathbb{N} = \{0,1,\dots\}$.
    Given two multisets $(A_1,m_1)$ and $(A_2,m_2)$
    we define the union to be the multiset
    $(A_1,m_1) \cup (A_2,m_2) := (A_1\cup A_2, m_1 + m_2)$,
    where $m_1 + m_2$ is shorthand for the function
    \begin{align*}
    	(m_1 + m_2)(x) = \begin{cases}
		m_1(x) & x \in A_1 \backslash A_2 \\
		m_2(x) & x \in A_2 \backslash A_1 \\
		m_1(x) + m_2(x) & x \in A_1 \cap A_2
		\end{cases}
    \end{align*}
    Given $x_1,\dots,x_n \in S$ we let $[x_1,\dots,x_n]$
    denote the multiset $(\{x_1,\dots,x_n\},m)$ where $m(x)$ denotes the multiplicity
    of $x$ in the sequece $x_1,\dots,x_n$.
    We call $A$ the underlying set of the multiset $(A,m)$,
    and we call $|(A,m)| := \sum_{x \in A}m(x)$ the cardinality
    of $(A,m)$.
  \end{defn}

    Consider the multiset $[1,1,2]$ and a map $f:\mathbb{N} \to \mathbb{R}$.
    We'd like to write the sum $f(1) + f(1) + f(2)$
    as $\sum_{x \in [1,1,2] } f(x)$.
    This motivates the following convention.
    If $(A,m)$ is a multiset and $f:A \to \mathbb{R}$, then
    \begin{align*}
      \sum_{ x \in (A,m) } f(x) := \sum_{x \in A } m(x) f(x),
    \end{align*}
    where the right hand side is a standard summation.
  \begin{defn}
    A multiset index of $d$ variables is a multiset $\alpha$
    whose underlying set is $\{1,\dots,d\}$. 
    We will denote multiset indices by greek letters rather than 
    as pairs of sets and multiplicity functions.
    The set of multiset indices on $\{1,\dots,d\}$ is denoted by $\bag(d)$,
    and the subset of which have cardinality $n$ is denoted by $\bag^n(d)$.
  \end{defn}

  Algebraically, a multiset index is an element of the free commutative
  module generated by the integers $1,\dots,d$.
  Heuristically, a multiset index is nothing but a bag of
  $n$ marbles, which come in colors $1,\dots,d$.
  Given $\alpha \in \bag^n(d)$ and $\beta \in \bag^m(d)$,
  the union $\alpha \cup \beta \in \bag^{n+m}(d)$ is the bag of $n+m$ marbles obtained
  by combing the bags $\alpha$ and $\beta$
  \cite{Blizard1989}.
  It is notable that the multi-indices used in \cite{ConstantineSavits1996}
  are equivalent to multiset indices.
  Specifically, they used the multiplicity function itself as an index.
  However, they did not use any of the multi-set structures which we are
  about to invoke here. Viewing the indices as 
  ``bags of stuff'' is particularly powerful, and
  we will find that the structure induced by this perspective greatly simplifies the
  derivation (and expression) of the chain rule.

  A labeling of a multiset $(A,m)$ of cardinality $n \in \mathbb{N}$
  is a map $a:\{1,\dots,n\} \to S$
  such that the cardinality of the set $a^{-1}(x) = \{ j : a(j) = x \}$
  is equal $m(x)$.
  Equivalently, a labelling of $(A,m)$ is just a tuple 
  $(a_1,\dots,a_{n})$ such that $[a_1,\dots,a_{n}] = (A,m)$.

  For a multiset index $\alpha$ we let $\partial_{\alpha}$
  denote the partial differential operator
  $\partial_{a_1 \cdots a_n}$ for an arbitrary labeling $(a_1,\dots,a_n)$
  of $\alpha$.
  Note that the chosen labelling of $\alpha$
  is immaterial due to the equivalence of mixed partials.
  Given this convention we observe
  $\partial_{\alpha} \partial_{\beta} = \partial_{\beta} \partial_{\alpha} = \partial_{ \alpha \cup \beta}$
  for any two multiset indices $\alpha$ and $\beta$.

  An important concept which we will use is the notion of a partition.
  For any set $S$ and any $k \in \mathbb{N}$,
  a $k$th order set-partition is a set of non-empty disjoint sets
  $\{ S_1,\dots,S_k\}$
  such that $S_1\cup \cdots \cup S_k = S$.
  We denote the set of $k$th order set-partitions of $S$ by $\Pi(S,k)$.
  We now generalize this notion to the case of multisets.

\begin{defn}
  Let $(A,m)$ be a multiset.  A $k$th order multiset-partition of $(A,m)$
  is a multiset of multisets $[(A_1,m_1),\dots,(A_k,m_k)]$
  such that $(A_1,m_1) \cup \dots \cup (A_k,m_k) = (A,m)$.
\end{defn}

If $(A,m)$ is a multi-set with cardinality $n$,
we can generate a $k$th order multiset partition
by considering a labeling $a:\{1,\dots,n\} \to A$
and considering a $k$th order set-partition of $\{1,\dots,n\}$.
In particular, if $\{S_1,\dots,S_k\}$ is a
partition of $\{1,\dots,n\}$
we can define the multiset $(A_i,m_i)$ where $A_i = a(S_i)$
and the multiplicity of $x \in A_i$ is given by the number of
elements of $S_i$ which map to $x$ under $a$.
Explicitly, $m_i( x ) = | \{ k \in S_i : a(k) = x \} |$
for each $x \in A_i$.
It follows that
 $[(A_1,m_1), \dots, (A_k,m_k)]$ is
a $k$th order multiset-partition of $(A,m)$.
Note that two distinct set-partitions of $\{1,\dots,n\}$
can generate the same multiset-partition.
Thus the space of multiset partitions of a multiset
generated in this way, has multiplicity.

\begin{defn}
  We let $\Pi((A,m),k)$ denote the multiset of $k$th order
  multiset-partitions of $(A,m)$.
  The multiplicity of 
  a multiset partition $[(A_1,m_1),\dots,(A_1,m_1)] \in \Pi(\alpha,k)$
  is defined as the number of partitions of $\{1,\dots,n\}$
  which generate it.
\end{defn}

Firstly, note that $\Pi((A,m),k)$ is independent of any labelling
we choose to generate it, as all labellings are equivalent up to permutations.
Secondly, note that the cardinality of the multiset $\Pi((A,m),k)$
is identical to the cardinality of the set $\Pi(\{1,\dots,n\},k)$
where $n = | (A,m)|$.
However, there are generally fewer \emph{distinct} multiset partitions
because we allow them to be repeated.

As an example consider the multiset index $[1,1,2]$.
The set $\{1,2,3\}$ has three distinct $2$nd order set-partitions:
$\{ \{1\},\{2,3\} \} , \{ \{2\},\{1,3\} \}$, and $\{ \{3\},\{1,2\} \}$.
Thus $| \Pi( [1,1,2] , 2) | = |\Pi( \{1,2,3\} , 2)| = 3$.
We find that $\Pi([1,1,2],2)$ is the multiset with partitions 
$[ [1],[2,1]]$ , $[ [2],[1,1] ]$, and $[ [1],[1,2] ]$.
Note that the first and the third multiset-partitions correspond to the
same multiset.  Thus $\Pi([1,1,2],2)$ has only $2$ distinct elements
but a cardinality of $3$.

\begin{thm} \label{thm:main}
Let $f:\R^c \to \R$ and $g:\R^d \to \R^c$.  Then
\begin{align*}
  \partial_\alpha( f \circ g)(x) = \sum_{n=1}^{|\alpha|} \left(
  \sum_{ 
    \substack{
      b_1 ,\dots,b_n\in \{1,\dots,c\} \\
      [\alpha_1,\dots,\alpha_n] \in \Pi(\alpha,n)
     }
    }
    \partial_{b_1\cdots b_n} f|_{g(x)}
    \prod_{k=1}^{n} \partial_{\alpha_k} g^{b_k}(x)
  \right),
\end{align*}
  for any $\alpha \in \bag(d)$.
\end{thm}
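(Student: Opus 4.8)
The plan is to induct on the cardinality $N = |\alpha|$. The base case $N=1$ is nothing but the first-order chain rule: if $\alpha = [i]$, then $\Pi([i],1)$ consists of the single partition $[[i]]$ with multiplicity one, and the formula collapses to $\partial_i(f\circ g)(x) = \sum_{b=1}^{c} \partial_b f|_{g(x)}\, \partial_i g^{b}(x)$. For the inductive step I would fix an $i$ in the underlying set of $\alpha$ of positive multiplicity and write $\alpha = \alpha' \cup [i]$ with $|\alpha'| = N-1$, so that $\partial_\alpha = \partial_i \partial_{\alpha'}$. Applying the inductive hypothesis to $\partial_{\alpha'}(f\circ g)$ and then differentiating once more by $\partial_i$ reduces the entire problem to understanding how a single derivative $\partial_i$ acts on a generic summand.

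The next step is a Leibniz computation. A generic summand from the hypothesis has the shape $\partial_{b_1\cdots b_n} f|_{g(x)} \prod_{k=1}^{n} \partial_{\alpha'_k} g^{b_k}(x)$, a product of one ``$f$-factor'' and $n$ ``$g$-factors''. Expanding $\partial_i$ of this product yields exactly two species of terms. Differentiating the $f$-factor invokes the chain rule, $\partial_i\big(\partial_{b_1\cdots b_n} f|_{g(x)}\big) = \sum_{b_{n+1}=1}^{c} \partial_{b_1\cdots b_n b_{n+1}} f|_{g(x)}\, \partial_i g^{b_{n+1}}(x)$; since $\partial_i g^{b_{n+1}} = \partial_{[i]} g^{b_{n+1}}$, this is precisely the summand attached to the partition obtained from $[\alpha'_1,\dots,\alpha'_n]$ by adjoining the fresh singleton block $[i]$, and it lives at order $n+1$. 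Differentiating the $k$-th $g$-factor instead gives $\partial_i\big(\partial_{\alpha'_k} g^{b_k}\big) = \partial_{\alpha'_k \cup [i]} g^{b_k}$, which is the summand attached to the partition obtained by inserting the marble $i$ into the $k$-th block, and it stays at order $n$. Collecting the contributions to $\partial_\alpha(f\circ g)$ at a fixed target order $n$, they arise either by adjoining to the order-$(n-1)$ summands of $\partial_{\alpha'}$ or by insertion into one of the blocks of the order-$n$ summands.

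It remains to verify that these two operations, applied across all partitions of $\alpha'$, reproduce the multiset $\Pi(\alpha,n)$ with exactly the right multiplicities, and I expect this to be the main obstacle. The underlying fact is the classical recursion for set-partitions: every set-partition of $\{1,\dots,N\}$ arises in a unique way either by adjoining $\{N\}$ as a new block to a set-partition of $\{1,\dots,N-1\}$, or by inserting $N$ into exactly one block of such a partition (uniqueness is seen by examining the block containing $N$). Because $\Pi(\alpha,n)$ is defined as the image of $\Pi(\{1,\dots,N\},n)$ under a fixed labeling, with the multiplicity of each multiset-partition declared to be the number of set-partitions that generate it, this recursion descends verbatim to the multiset level and carries the multiplicities along, so no binomial or factorial corrections ever appear. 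The one bookkeeping point to handle with care is that the summand is invariant under simultaneously permuting the blocks $\alpha'_k$ and the labels $b_k$ (the $f$-factor is symmetric in its indices by equality of mixed partials, and the product of $g$-factors is symmetric under the matched permutation); this is what makes the sum well defined over the unordered blocks of a multiset-partition, and the convention $\sum_{x\in(A,m)} = \sum_{x\in A} m(x)$ then performs the overcounting automatically. Running the induction through this identification completes the proof.
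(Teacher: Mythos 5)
Your proposal is correct and follows essentially the same route as the paper: induction on $|\alpha|$, a Leibniz/chain-rule expansion producing the "new singleton block" and "insert into an existing block" terms, and the same key combinatorial fact (the paper isolates it as a lemma) that these two operations enumerate $\Pi([i]\cup\alpha', n)$ with matching multiplicities, proved by lifting to set-partitions of $\{1,\dots,N\}$ via a labeling. Your extra remark on the symmetry of the summand under simultaneous permutation of the blocks and the $b_k$'s is a sensible bookkeeping point that the paper leaves implicit, but it does not change the argument.
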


Before we prove the theorem we consider the following lemma.
\begin{lem}
  Let $\alpha \in \bag(d)$ and $[a_0] \in \bag^1(d)$.
  Any $(n+1)^{\rm th}$ order multiset partition of $[a_0] \cup \alpha$
  is either of the form $[a_0,\alpha_1,\dots,\alpha_n]$
  for some $[\alpha_1,\dots,\alpha_n] \in \Pi(\alpha,n)$,
  or of the form $[ \alpha_1, \dots, a_0 \cup \alpha_{k} , \dots, \alpha_{n+1}]$
  for some $[\alpha_1,\dots,\alpha_{n+1}] \in \Pi(\alpha,n+1)$.
  Moreover, the given multiset which includes $a_0$ has the 
  same multiplicity as $[\alpha_1,\dots,\alpha_n] \in \Pi(\alpha,n)$
  or $[\alpha_1,\dots,\alpha_{n+1}] \in \Pi(\alpha,n+1)$.
\end{lem}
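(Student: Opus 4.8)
The plan is to reduce everything to ordinary set-partitions by working with a carefully chosen labeling, and then to read off both the dichotomy and the multiplicity statement from what happens to the single distinguished index carrying $a_0$. Write $N = |\alpha|$, so that $|[a_0]\cup\alpha| = N+1$. I would fix a labeling $a\colon\{0,1,\dots,N\}\to[a_0]\cup\alpha$ of the enlarged multiset in which the index $0$ is sent to $a_0$ and the restriction of $a$ to $\{1,\dots,N\}$ is a labeling of $\alpha$; such a labeling always exists, since one is free to route the index $0$ to any prescribed value. By definition the multiset-partitions of $[a_0]\cup\alpha$ of order $n+1$ are exactly those generated by the set-partitions of $\{0,1,\dots,N\}$ into $n+1$ blocks, and the multiplicity of each is the number of such generating set-partitions.

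The dichotomy then comes from asking where the distinguished index $0$ lands. Given a set-partition $P$ of $\{0,\dots,N\}$ into $n+1$ blocks, let $B\in P$ be the unique block with $0\in B$. If $B=\{0\}$, then $P\setminus\{B\}$ is a set-partition of $\{1,\dots,N\}$ into $n$ blocks; applying the labeling, $P$ generates the multiset-partition $[[a_0],\alpha_1,\dots,\alpha_n]$, where $[\alpha_1,\dots,\alpha_n]\in\Pi(\alpha,n)$ is generated by $P\setminus\{B\}$. This is the first form. If instead $B\supsetneq\{0\}$, then replacing $B$ by $B\setminus\{0\}$ produces a set-partition $P'$ of $\{1,\dots,N\}$ into $n+1$ blocks, and $P$ generates $[\alpha_1,\dots,[a_0]\cup\alpha_k,\dots,\alpha_{n+1}]$, where $\alpha_k$ is the block labelled by $B\setminus\{0\}$ and $[\alpha_1,\dots,\alpha_{n+1}]\in\Pi(\alpha,n+1)$ is generated by $P'$. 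This is the second form, and since every $P$ falls into exactly one of the two cases, every multiset-partition of $[a_0]\cup\alpha$ arises in at least one of the two forms.

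For the multiplicity statement I would upgrade these two constructions to bijections between generating set-partitions. In the first case, $P\mapsto P\setminus\{\{0\}\}$ and $Q\mapsto Q\cup\{\{0\}\}$ are mutually inverse bijections between the set-partitions of $\{0,\dots,N\}$ that generate $[[a_0],\alpha_1,\dots,\alpha_n]$ with $0$ isolated and the set-partitions of $\{1,\dots,N\}$ that generate $[\alpha_1,\dots,\alpha_n]$; counting both sides gives equality of the two multiplicities. In the second case the map that deletes $0$ from its block is a bijection onto the set-partitions of $\{1,\dots,N\}$ generating $[\alpha_1,\dots,\alpha_{n+1}]$, with the recipient block marked, again matching multiplicities.

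The step I expect to be delicate is the bookkeeping when $a_0$ already occurs in $\alpha$. Then $a_0$ has multiplicity at least two in $[a_0]\cup\alpha$, and a single multiset-partition may legitimately be written in both forms at once; for instance $[[1],[1,2]]$ is a partition of $[1,1,2]$ with $a_0=1$ that is simultaneously of the first form (over $[[1,2]]\in\Pi([1,2],1)$) and of the second (over $[[1],[2]]\in\Pi([1,2],2)$ with $a_0$ merged into $[2]$). The dichotomy is therefore clean only after one remembers which copy of $a_0$ is the distinguished one, equivalently only at the level of generating set-partitions, where the index $0$ is either isolated or not. Keeping the distinguished copy fixed throughout is exactly what makes the bijections above well defined and prevents double-counting, and it is what lets the two forms supply, without overlap, the two families of terms produced by a single application of the product rule in the proof of Theorem \ref{thm:main}.
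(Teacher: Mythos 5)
Your proof follows essentially the same route as the paper's: fix a labeling that sends a distinguished index $0$ to $a_0$, split cases according to whether the block containing $0$ is the singleton $\{0\}$, and match multiplicities via a bijection between generating set-partitions of $\{0,\dots,N\}$ and of $\{1,\dots,N\}$. Your closing observation --- that when $a_0$ already occurs in $\alpha$ a single multiset-partition such as $[[1],[1,2]]$ of $[1,1,2]$ can be written in both forms, so the dichotomy is only clean at the level of generating set-partitions where the index $0$ is either isolated or not --- is a genuine subtlety that the paper's proof (which calls the two scenarios ``complementary but disjoint'') passes over, and your resolution is the correct one for how the lemma is actually used in the proof of the main theorem.
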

\begin{proof}
  Let $[\delta_0,\dots, \delta_{n}] \in \Pi( [a_0] \cup \alpha,n+1)$.
  Then $a_0$ is contained in some $\delta_{k}$.
  We consider to complementary but disjoint scenarios.
  Either $\delta_k = [a_0]$ for some $k$, or not.

  If $\delta_{k} = [a_0]$
  then the remaining delta's must partition $\alpha$.
  In other words,
  $[\delta_0,\dots,\delta_{n}] = [ [a_0], \alpha_1,\dots,\alpha_n]$
  for some $[\alpha_1,\dots,\alpha_n] \in \Pi(\alpha,n)$.
  Let $p = |\alpha|$.
  If $a:\{1,\dots,p\} \to \{1,\dots,d\}$ is a labelling of $\alpha$
  we can see that any set partition $\{S_1,\dots,S_n\} \in \Pi(\{1,\dots,p\},n)$
  which generates $[\alpha_1,\dots,\alpha_n]$
  can be put in one-to-one correspondence
  with the set partition $\{ \{0\} , S_1,\dots,S_n\} \in \Pi( \{0,\dots,p\},n)$.
  The later set partition generates the multiset partition
  $[ [a_0],\alpha_1,\dots,\alpha_n] = [\delta_0,\dots,\delta_n]$.
  Thus the multiplicity of $[\delta_0,\dots,\delta_n] \in \Pi(\alpha,n+1)$
  is identical to that of $[\alpha_1,\dots,\alpha_n] \in \Pi(\alpha,n)$.

  Otherwise, $a_0$ is contained in a $\delta_k$ of the form
  $a_0 \cup \alpha_k$ for some multiset index $\alpha_k$.
  Again, $\alpha_k$ along with the remaining $\delta$'s 
  must partition $\alpha$.
  If we set $\alpha_j = \delta_j$ for $j\neq k$ we see that 
  $[\alpha_0,\dots,\alpha_n] \in \Pi(\alpha,n+1)$.
  The multiplicity of $[\alpha_0,\dots,\alpha_n]$ 
  is identical to that of $[\delta_0,\dots,\delta_n]$
  by virture of the same argument used in the previous case.
\end{proof}

We now proceed to prove the main theorem.

\begin{proof}
  We prove it inductively.  It holds by inspection at order $1$.
  Assume it holds for some higher order multiset index $\alpha$,
  and let $[a_0] \in \bag^1(d)$.
  By the product formula and chain rule, we find
  \begin{align*}
    &\partial_{a_0}\partial_\alpha( f \circ g)(x) = 
    \partial_{[a_0] \cup \alpha}( f \circ g)(x) \\
    &\quad =\sum_{n=1}^{|\alpha|}
    \sum_{ 
      \substack{
        b_1,\dots,b_n \in \{1,\dots,c\} \\
        [\alpha_1,\dots,\alpha_n] \in \Pi(\alpha,n) 
        }
      }
      \left( \sum_{b_0=1}^d
        \partial_{b_0 b_1 \cdots b_n} f(g(x))
        \partial_{a_0} g^{b_0}(x)
        \partial_{\alpha_1} g^{b_1}(x) \cdots
        \partial_{\alpha_n} g^{b_n}(x)
      \right)\\
      &\qquad + \partial_{b_1 \cdots b_n} f(g(x))
      \partial_{a_0 \cup \alpha_1} g^{b_1}(x) \cdots
      \partial_{\alpha_n} g^{b_n}(x) \\
      &\qquad + \partial_{b_1 \cdots b_n} f(g(x))
      \partial_{\alpha_1} g^{b_1}(x) 
      \partial_{a_0 \cup\alpha_2} g^{b_2}(x)  \cdots
      \partial_{\alpha_n} g^{b_n}(x) \\
      &\qquad \vdots \\
      &\qquad + \partial_{b_1 \cdots b_n} f(g(x))
      \partial_{\alpha_1} g^{b_1}(x) \cdots
      \partial_{a_0\cup\alpha_n} g^{b_n}(x) . 
  \end{align*}
Let us now collect all coefficients of $\partial_{b_0 \cdots b_n} f(g(x))$.
We observe that this coefficient is
\begin{align*}
&   \left( \sum_{ [\alpha_1,\dots,\alpha_n] \in \Pi( \alpha , n) }
   \partial_{a_0}g^{b_0}(x) \partial_{\alpha_1} g^{b_1}(x) \cdots \partial_{\alpha_n}g^{b_n}(x) \right) \\
& +
 \left( \sum_{k=0}^{n} \sum_{ [\alpha_0, \dots,\alpha_n] \in \Pi( \alpha,n+1) }
  \partial_{\alpha_0}g^{b_0} \cdots \partial_{a_0 \cup \alpha_k} g^{b_k} \cdots \partial_{\alpha_n} g^{b_n}\right).
\end{align*}
By the lemma, one could write this more succinctly as
\begin{align*}
  \sum_{[[\alpha_0],\dots,\alpha_n] \in \Pi( a_0 \cup \alpha , n+1) }
  \partial_{\alpha_0}g^{b_0}(x)
  \cdots
  \partial_{\alpha_n}g^{b_n}(x).
\end{align*}
We substitute the coefficent of
$\partial_{b_0 \cdots b_n} f(g(x))$ into our original formula to arrive at
\begin{align*}
  \partial_{[a_0] \cup \alpha}( f \circ g)(x) &= \sum_{n=0}^{|\alpha |} \sum_{
    \substack{
      b_0,\dots,b_n \in \{ 1,\dots,c\}\\
      [\alpha_0,\dots,\alpha_n] \in \Pi ([a_0] \cup \alpha ,n+1)      
      }
    }\partial_{b_0 \cdots b_n}f(g(x)) \partial_{\alpha_0}g^{b_0}(x) \cdots \partial_{\alpha_n}g^{b_n}(x) \\
  &=\sum_{n=1}^{| [a_0] \cup \alpha |} \sum_{
    \substack{
      b_1,\dots,b_n \in \{1,\dots,c\}\\
      [\alpha_1,\dots,\alpha_n] \in \Pi( [a_0] \cup\alpha,n)      
      }
    }\partial_{b_1 \cdots b_n}f(g(x)) \partial_{\alpha_1}g^{b_1}(x) \cdots \partial_{\alpha_n}g^{b_n}(x).
\end{align*}
Thus we have proven the formula for an arbitrary multi-set index of cardinality $|\alpha| + 1$.
\end{proof}

One critique that can be lodged is that Theorem \ref{thm:main} invokes two
indexing convections.  It uses standard multi-indices, via the $b$'s,
and it uses multiset indices, via $\alpha$ and its multiset partitions.
One quick fix for this is to define a new notation.
Given $\beta \in \bag^n(d)$ and $[\alpha_1,\dots,\alpha_n] \in \Pi(\alpha,n)$,
\begin{align*}
	\partial_{[\alpha_1,\dots,\alpha_n]} g^\beta(x) := \sum_{[b_1,\dots,b_n] = \beta} \prod_{k=1}^{n} \partial_{\alpha_k} g^{b_k}(x)
\end{align*}
where the outer sum is over all labelings of $\beta$.
This allows us to replace the sum over the $b$'s in Theorem \ref{thm:main}
as a sum over multiset indices.
\begin{thm}
	If $f:\R^c \to \R$, $g :\R^d \to \R^c$, then
	\begin{align*}
		\partial_\alpha( f \circ g) (x) = \sum_{n=1}^{|\alpha|} \sum_{
			\substack{
				\beta \in \bag^n(c) \\
				[\alpha_1,\dots,\alpha_n] \in \Pi(\alpha,n)}
				 } \partial_\beta f|_{g(x)} \partial_{[\alpha_1,\dots,\alpha_n]} g^\beta(x)
	\end{align*}
	for any $\alpha \in \bag(d)$.
\end{thm}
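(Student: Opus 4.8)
The plan is to derive this statement directly from Theorem \ref{thm:main} by a change of indexing variable, replacing the sum over tuples $(b_1,\dots,b_n) \in \{1,\dots,c\}^n$ by a sum over multiset indices $\beta \in \bag^n(c)$ together with their labelings. No new analytic content is required: the entire argument is a bookkeeping reorganization of the sum already established, using the definition of $\partial_{[\alpha_1,\dots,\alpha_n]}g^\beta(x)$ introduced just above.

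First I would recall that, by the equivalence of mixed partials, the operator $\partial_{b_1\cdots b_n}$ depends only on the underlying multiset $[b_1,\dots,b_n]$, so that $\partial_{b_1\cdots b_n} f|_{g(x)} = \partial_\beta f|_{g(x)}$ whenever $[b_1,\dots,b_n] = \beta$. Consequently, in the inner sum of Theorem \ref{thm:main} the factor $\partial_{b_1\cdots b_n} f|_{g(x)}$ is constant across all tuples sharing a common underlying multiset. I would then partition the index set $\{1,\dots,c\}^n$ along the surjection $(b_1,\dots,b_n)\mapsto [b_1,\dots,b_n]$; every tuple has a unique image $\beta \in \bag^n(c)$, and the fibre over $\beta$ is precisely the set of labelings of $\beta$ (by the definition of a labeling given earlier). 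Thus, for a fixed partition $[\alpha_1,\dots,\alpha_n]\in\Pi(\alpha,n)$, the inner sum $\sum_{b_1,\dots,b_n}(\cdots)$ splits as $\sum_{\beta\in\bag^n(c)}\sum_{[b_1,\dots,b_n]=\beta}(\cdots)$.

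With the constant factor $\partial_\beta f|_{g(x)}$ pulled out of the innermost sum, what remains is $\sum_{[b_1,\dots,b_n]=\beta}\prod_{k=1}^n \partial_{\alpha_k} g^{b_k}(x)$, which is exactly $\partial_{[\alpha_1,\dots,\alpha_n]} g^\beta(x)$ by definition. Substituting this and interchanging the order of the two finite sums (over $\beta$ and over partitions) then yields the displayed formula. The outer sum over $n$ from $1$ to $|\alpha|$ is carried over verbatim from Theorem \ref{thm:main}.

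The one point requiring genuine care — and the only place I expect any friction — is the well-definedness of $\partial_{[\alpha_1,\dots,\alpha_n]} g^\beta(x)$, since a multiset partition carries no intrinsic ordering of its parts, yet the defining product $\prod_k \partial_{\alpha_k} g^{b_k}$ pairs the $k$th part with the $k$th label. I would verify that reindexing the parts by a permutation $\sigma$ is absorbed by the simultaneous reindexing $b_k \mapsto b_{\sigma(k)}$ of the labeling, under which the outer sum over all labelings of $\beta$ is invariant; hence the quantity depends only on the multiset $[\alpha_1,\dots,\alpha_n]$ and on $\beta$, consistent with the notation. Granting this compatibility, the reorganization above is unambiguous and the proof is complete.
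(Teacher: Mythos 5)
Your proposal is correct and follows exactly the route the paper intends: the paper states this theorem without explicit proof, treating it as an immediate consequence of Theorem \ref{thm:main} via the definition of $\partial_{[\alpha_1,\dots,\alpha_n]}g^\beta(x)$, and your regrouping of the tuples $(b_1,\dots,b_n)$ into fibres over their underlying multisets $\beta$ is precisely that bookkeeping step made explicit. Your additional check that $\partial_{[\alpha_1,\dots,\alpha_n]}g^\beta(x)$ is well-defined under reordering of the parts is a worthwhile detail the paper leaves implicit.
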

In the case where $f,g:\R \to \R$ this version of the higher-order chain
rule is identical to the set partition version
of the Fa\`a di Bruno formula shown on page 3 of \cite{Johnson2002}.

\section{Conclusion}
While the $n$-dimensional higher-order chain rule may be difficult
to tackle when using standard multi-indices, it appears
relatively simple when using multiset indices.
This is not to say that multiset indexing is a superior indexing convention.
The multiset of multiset partitions $\Pi(\alpha,n)$
can be irritating to enumerate.
It is quite conceivable that one would prefer to 
enumerate over all tuples of integers and then divide by the 
number of repeated terms.
However, the combinatorial coefficients can be difficult to compute and
interpret.
Therefore, it is useful to have an alternative which disposes of them
in place of more tactile objects.

\section{Acknowledgements}
I owe a special thanks to Jaap Eldering for meticulously checking 
virtually every nook and cranny of this article.
This research is supported by European Research Council Advanced Grant 267382.

\bibliographystyle{amsalpha}

\providecommand{\bysame}{\leavevmode\hbox to3em{\hrulefill}\thinspace}
\providecommand{\MR}{\relax\ifhmode\unskip\space\fi MR }
\providecommand{\MRhref}[2]{%
  \href{http://www.ams.org/mathscinet-getitem?mr=#1}{#2}
}
\providecommand{\href}[2]{#2}

\end{document}